\documentclass[11pt]{amsart}
\usepackage{latexsym,amssymb,amsmath,hyperref,empheq}
\usepackage{amsthm, dsfont}
\usepackage{tikz-cd}
\usepackage[all]{xy}
\usepackage[short,nodayofweek]{datetime}
%-- for inverse search in dvi-editor ---
\usepackage[active]{srcltx}
%----------

%-----------------------------------------------------------
%-----------------------------------------------------------
%-----------------------------------------------------------
%-----------------------------------------------------------

%\newcommand\classification[2][]{%
%  \gdef\@classification{%
%    \href{http://www.ams.org/msc/}%
%{\textit{2000 Mathematics Subject Classification}} \ignorespaces#2\unskip}}
%\let\subjclass\classification

%-----------------------------------------------------------
%-----------------------------------------------------------
%-----------------------------------------------------------
%-----------------------------------------------------------

\addtolength\textwidth{5mm}

\setlength\leftmargini  {2pc}
\leftmargin  \leftmargini
\setlength\leftmarginii  {1.5pc}
\setlength\leftmarginiii {1.5pc}
\setlength\leftmarginiv  {1.5pc}
\setlength  \labelsep  {6pt}
\setlength  \labelwidth{\leftmargini}
\addtolength\labelwidth{-\labelsep}

%---------------------------------------------------------
\setlength\parindent{0mm}
%---------------------------------------------------------

%-----------------------------------------------------------
%-----------------------------------------------------------
%-----------------------------------------------------------
%-----------------------------------------------------------

% --- Buchstaben in anderen Schriftarten ----

\def\ZZ{{\mathbb Z}}

\def\RR{{\mathbb R}}
\def\CC{{\mathbb C}}

\def\H{{\mathcal H}}

%----------------------------------------------------------------------------------------------------------------------
% newcommands:
%\newcommand{\kk}{{\Bbbk}}

%triangle at the end of example

%------------------------------------------------------------------------------------------------------------------------

% --- spezielle Abbildungen -----------------

% -----spezielle Mengen ---------------------

% --- Abk"urzungen -----------------

% -----Operatoren -----------------

\DeclareMathOperator{\trace}{tr}

\DeclareMathOperator{\re}{Re}
\DeclareMathOperator{\Sp}{Sp}

%\DeclareMathOperator{\GL}{GL}
%-----------------------------------------------------------  
%-----------------------------------------------------------  
%-----------------------------------------------------------  
%-----------------------------------------------------------  

%Theoremumgebungen 

\theoremstyle{plain}
\newtheorem{thm}{Theorem}[section]
\newtheorem{cor}[thm]{Corollary}
\newtheorem{lem}[thm]{Lemma}

\theoremstyle{definition}

%\newtheorem{exmp}[thm]{Example}
%\newtheorem{rem}[thm]{Remark}
%\newtheorem{exam}{Example}

%new thms:

\theoremstyle{remark}

\newtheoremstyle{Acknowledgements}% name
  {}% {\topsep}%      Space above
    {}% {\topsep}%      Space below
     {}%         Body font
     {}%         Indent amount (empty = no indent, \parindent = para indent)
    {\bfseries}% Thm head font
    {}%        Punctuation after thm head
     {.5em}%     Space after thm head: " " = normal interword space;
      %            \newline = linebreak
     {\thmname{#1}\thmnumber{ }\thmnote{ (#3)}}%         Thm head spec (can be left empty, meaning `normal')
\theoremstyle{Acknowledgements}

%-----------------------------------------------------------  
\date{\today, \currenttime} 

%-----------------------------------------------------------  
%-----------------------------------------------------------  
%-----------------------------------------------------------  
%----------------------------------------------------------- 

%-------------- Ende der Definitionen ---------------------------

%-----------------------------------------------------------  
%-----------------------------------------------------------  
%-----------------------------------------------------------  
%----------------------------------------------------------- 

\begin{document}
\title[Sturm for scalar weight]{Sturm's operator for scalar weight in arbitrary genus}

\author{Kathrin Maurischat }
\address{\rm {\bf Kathrin Maurischat}, Mathematisches Institut,
   Heidelberg University, Im Neuenheimer Feld 205, 69120 Heidelberg, Germany }
\curraddr{}
\email{\sf maurischat@mathi.uni-heidelberg.de}
%\address{\rm {\bf Rainer Weissauer}, Mathematisches Institut,
%   Heidelberg University, Im Neuenheimer Feld 205, 69120 Heidelberg, Germany }
%\curraddr{}
%\email{\sf weissauer@mathi.uni-heidelberg.de}
\thanks{This work was  supported by the European Social Fund.}

% AMS-Classification
\subjclass[2010]{11F46, 46G20}
\begin{abstract}
In contrast to the wellknown cases of large weights, 
 Sturm's operator does not realize the holomorphic projection operator for lower weights.
 We prove its failure for arbitrary Siegel genus $m\geq 2$ and scalar weight $\kappa=m+1$.
 This generalizes a result for genus two in \cite{phantom}.
\end{abstract}

% Keywords
%\keywords{}
\maketitle
\setcounter{tocdepth}{2}
\tableofcontents

\section{Introduction}
Sturm's operator arises in questions concerning the description of holomorphic projection.
Let $\Sp_m(\RR)$ be the real symplectic group of genus $m$. Let $\Gamma$ be a subgroup of $\Sp_m(\ZZ)$ of finite index containing the group of translations $\Gamma_\infty$.
For an integer $\kappa$ we look at $L^2$-functions of weight $\kappa$, i.e.  transforming with the
 $\kappa$-th power of the determinant under right translations by the maximal compact subgroup $K$. The problem of holomorphic projection is  to describe the orthogonal projection operator
\begin{equation*}
 S\::\:L^2(\Gamma\backslash \Sp_m(\mathbb R))_\kappa \xrightarrow{\:\mathop{proj}\:} L^2(\Gamma\backslash \Sp_m(\mathbb R))_{\kappa, hol}\:,
\end{equation*}
where $L^2(\Gamma\backslash \Sp_m(\mathbb R))_{\kappa, hol}$ is that part of the spectrum annihilated by $\mathfrak p^-$, the negative eigenspace in the Cartan decomposition of the 
Lie algebra of $\Sp_m(\RR)$.
The only known tools for this are analytic ones. For a $C^\infty$-modular form $F$ on the Siegel halfspace $\H_m$, Sturm's operator produces coefficients $a(T)$ 
for each positive definite halfintegral $T$,
which turn out to be the Fourier coefficients of the holomorphic projection $S(F)$ if the weight $\kappa$ is large, i.e. $\kappa>2m$.
This is proved by an unfolding argument. The coefficients $a(T)$ are the values of the inner product of $F$ against a system of Poincar\'e series $P_T$, which spans the space of
holomorphic cuspforms $[\Gamma,\kappa]_0$ for $\Gamma$ of weight $\kappa$.
This method was invented by Sturm~\cite{sturm1}, \cite{sturm2} in the classical case $m=1$, and proved for arbitrary genus $m$ by Panchishkin~\cite{panchishkin}.
By analytic continuation (which is indeed holomorphic in these cases) of the Poincar\'e series  it is also seen to be true for classical $m=1$ and $\kappa=2$ (\cite{gross-zagier}), 
and for $m=2$ and $\kappa=4$ (\cite{holproj}), that is in the
case of low weight $\kappa=2m$.

But in \cite{phantom} it was shown that this method fails in case of genus $m=2$ and weight $\kappa=m+1=3$. First, the analytic continuations of the Poincar\'e series fail to be holomorphic,
but have  nonzero parts in the spectral component generated by a holomorphic cuspform of weight $\kappa-2$. Second, this nonzero share is not annihilated by Sturm's operator, 
but produces a so called phantom term.

There is a strong interplay between analysis and representation theory.
As spectral theory for small weights is  a difficult and mostly open problem, Sturm's operator can be used as an analytic indicator. 
If a nonholomorphic spectral component isn't mapped to zero by it, i.e. produces phantoms,
we expect that the analytic continuation of Poincar\'e series, which is unknown in general, will have a nonzero share in this spectral component, too.

In this paper we prove that Sturm's operator does not realize the holomorphic projection operator in the case of weight $\kappa=m+1$ for arbitrary genus $m\geq 2$.
More precisely, it shows the analogous behavior as in case $m=2$. If $h\in[\Gamma,\kappa-2]_0$ is a nonzero holomorphic cuspform of weight $\kappa-2$, then its image under
Maass' shift operator $\Delta_+^{[m]}$ is a nonholomorphic modular form of weight $\kappa$.   Under Sturm's operator of level $\kappa$ this function $\Delta_+^{[m]}(h)$ produces a nonzero 
phantom term. The arguments rely on properties of differential operators developped in~\cite{freitag}.

The question remains whether the images $\Delta_+^{[m]}([\Gamma, m-1]_0)$ are the only spectral components which contribute to the phantom terms for arbitrary genus.
Connected to this, it is also an interesting question whether Sturm's operator fails to realize the holomorphic projection for the remaining smaller weights $\kappa=m+2,\dots, 2m$.
It is a corollary (see Corollary~\ref{resultat_k>=m}) of our proof that for these weights the functions of $\Delta_+^{[m]}([\Gamma, \kappa-2]_0)$ do not produce phantoms.
There are obviously further candidates to test, the images $(\Delta_+^{[m]})^r (h)$ for holomorphic cuspforms $h\in[\Gamma, \kappa-2r]_0$ for positive weights $\kappa-2r>0$.
Although this test can in principle be done by the basic analytic tools we use here, the iterated application of $\Delta_+^{[m]}$ demands an enormous computational effort.
As we do not have a direct application for such results, we don't follow  them here. Further, these weren't the only functions to test. Phantoms could arise as well from spectral 
components given by vector valued holomorphic cuspforms or even from nonholomorphic spectral components containing the $K$-type $\kappa$ nontrivially.
So the result presented here which might look poorly at a first glance is the best available at the moment by reasonable costs.

\section{Notation and statement of results}
Let $G=\Sp_m(\RR)$ be the real symplectic group of genus $m$. Let $\Gamma\subset \Sp_m(\ZZ)$ be a discrete subgroup of $G$ containing the group of translations $\Gamma_\infty$ (unipotent upper
triangular matrices). Let $\mathcal H_m$ be the Siegel upper halfspace equipped with the ordinary action of $G$, 
$gZ=(aZ+b)(cZ+d)^{-1}$, and denote by $K\simeq U(m)$ the maximal compact subgroup of $G$ which stabilizes 
$i E_m\in\mathcal H_m$.
Let $F$ be a nonholomorphic modular cusp form on $\mathcal H_m$ for $\Gamma$ of scalar weight $\kappa$ with Fourier expansion
\begin{equation*}
 F(Z)\:=\:\sum_{T>0}A(T,Y)\exp(2\pi i\trace(TX))\:,
\end{equation*}
where the sum runs over all positive definite $(m,m)$-matrices $T$ with half integral entries.
Recall (\cite{phantom}) Sturm's operator for weight $\kappa$ in terms of Fourier coefficients
\begin{equation*}
 S\::\:A(T,Y)\:\mapsto\: a(T)\:,
\end{equation*}
\begin{equation*}
 a(T)\:=\: c(m,\kappa)^{-1}\int_{Y>0}A(T,Y)\exp(-2\pi\trace(TY))\det(TY)^{\kappa-\frac{m+1}{2}}dY_{inv}\:.
\end{equation*}
Here $dY_{inv}=\det(Y)^{-\frac{m+1}{2}}\prod_{j\leq k}dY_{jk}$ is the invariant measure on the symmetric space $\{ Y\in\mathop{Symm}\nolimits_m(\RR)\mid Y>0\}$. The constant
\begin{equation*}
 c(m,\kappa)\:=\: (4\pi)^{-m(\kappa-\frac{m+1}{2})}\Gamma_m(\kappa-\frac{m+1}{2})
\end{equation*}
is chosen such that on holomorphic forms, i.e. $A(T,Y)=a(T)\exp(-2\pi\trace(TY))$, Sturm's operator is the identity.
Define the formal sum
\begin{equation*}
 S_F(Z)\:=\:\sum_{T>0}a(T)\exp(2\pi i\trace(TZ))\:.
\end{equation*}
In many cases, i.e. for large $\kappa>2m$ (\cite{panchishkin}) or for $m\leq 2$ and $\kappa=2m$ (\cite{gross-zagier} for $m=1$, \cite{holproj} for $m=2$), 
Sturm's operator realizes the holomorphic projection operator. That is, $S_F \in[\Gamma,\kappa]_0$ defines a holomorphic cusp form of weight $\kappa$, and for all $f\in [\Gamma,\kappa]_0$
we have
\begin{equation*}
 \langle F,f\rangle\:=\:\langle S_F,f\rangle\:,
\end{equation*}
with respect to the scalar product  on $L^2_\kappa(\Gamma\backslash \mathcal H_m)$.
In~\cite{phantom} it was shown that this doesn't hold  anymore for $m=2$ and $\kappa=3$, because in this case  Sturm's operator applied to the non-holomorphic function 
\begin{equation*}
 \tilde h\::=\:\Delta_+^{[2]}( h)
\end{equation*}
has nonzero value.
Here $h\in[\Gamma,k]_0$ is a holomorphic cusp form of weight $k:=\kappa-2$, and $\Delta_+^{[m]}$ is the Maass differential operator
\begin{equation*}
 \Delta_+^{[m]}\::\: C^\infty(\mathcal H_m,V_\tau) \to C^\infty(\mathcal H_m, V_{\tau\otimes \det\nolimits^2})\:,
\end{equation*}
\begin{equation*}
 \Delta_+^{[m]} (h)(Z)\:=\: (2i)^m(\tau\otimes\det\nolimits^{\frac{1-m}{2}})(Y^{-1})\det(\partial_Z)\left((\tau\otimes\det\nolimits^{\frac{1-m}{2}})(Y)\cdot h(Z)\right),
\end{equation*}
which sends a form of weight $\tau$ to a form of weight $\tau\otimes\det\nolimits^2$.
We show that this holds in general for arbitrary genus $m$ and weight $\kappa=m+1$.
\begin{thm}\label{hauptresultat}
 Let $m\geq 2$ and let $\kappa=m+1$. Let $h\in[\Gamma,k]_0$ be a holomorphic cuspform of weight $k=\kappa-2$ with Fourier expansion
 \begin{equation*}
  h(Z)\:=\:\sum_{T>0}b(T)\exp(2\pi i\trace(TZ))\:.
 \end{equation*}
The image  under Sturm's operator for weight $\kappa$ of the function $\tilde h=\Delta_+^{[m]}( h)$ is given by the Fourier coefficients
\begin{equation*}
 a(T)\:=\:(-1)(4\pi)^m\det(T)\cdot b(T)\:. 
\end{equation*}
So 
the formal image is
\begin{equation*}
 S_{\tilde h}(Z)\:=\: (-1)^{m+1}(2i)^m\cdot \det(\partial_Z)(h(Z))\:.
\end{equation*}
\end{thm}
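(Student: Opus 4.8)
The plan is to compute the $T$-th Fourier coefficient $A(T,Y)$ of $\tilde h=\Delta_+^{[m]}(h)$ in closed form, substitute it into Sturm's integral, and evaluate. First I would unwind the weight: since $h$ has scalar weight $k=m-1$, the twist $\tau\otimes\det^{\frac{1-m}{2}}$ collapses to $\det^{\frac{m-1}{2}}$, so, writing $\alpha=\frac{m-1}{2}$,
\[
\tilde h(Z)=(2i)^m\det(Y)^{-\alpha}\det(\partial_Z)\bigl(\det(Y)^{\alpha}h(Z)\bigr).
\]
Applying this to the Fourier series of $h$ term by term and conjugating the holomorphic exponential through the operator, via $e^{-2\pi i\trace(TZ)}\partial_{jk}\,e^{2\pi i\trace(TZ)}=\partial_{jk}+2\pi iT_{jk}$ on each matrix entry, gives
\[
A(T,Y)=(2i)^m\,b(T)\,e^{-2\pi\trace(TY)}\,P(T,Y),\qquad P(T,Y):=\det(Y)^{-\alpha}\bigl[\det(\partial_Z+2\pi iT)\det(Y)^{\alpha}\bigr],
\]
where $\partial_Z+2\pi iT$ is the matrix of operators with entries $\partial_{jk}+2\pi iT_{jk}$, and I use the normalisation $\partial_{jk}e^{2\pi i\trace(TZ)}=2\pi iT_{jk}e^{2\pi i\trace(TZ)}$.

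The next step is to make $P(T,Y)$ explicit. Each $\partial_{jk}$ acting on a power of $\det(Y)$ returns $-\tfrac{i}{2}(\text{power})(Y^{-1})_{jk}$ times that power, while acting on an entry of $Y^{-1}$ returns a product of two entries of $Y^{-1}$. The determinantal bookkeeping of these Leibniz contributions, including the non-commutation (Capelli-type) corrections from derivatives that land on $Y^{-1}$-factors produced by earlier derivatives, is exactly what the differential-operator identities of \cite{freitag} organise. I would invoke those identities to express $P(T,Y)$ as an explicit polynomial in the entries of $T$ and of $Y^{-1}$, whose top term is $\det(2\pi iT)=(2\pi i)^m\det(T)$ and whose remaining terms each carry at least one factor of $Y^{-1}$.

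I would then feed this into Sturm's formula. The choice $\kappa=m+1$ is what makes the integral tractable: here $\kappa-\frac{m+1}{2}=\frac{m+1}{2}$, so $\det(TY)^{\kappa-\frac{m+1}{2}}\,dY_{inv}=\det(T)^{\frac{m+1}{2}}\prod_{j\le k}dY_{jk}$, and the two exponentials combine to $e^{-4\pi\trace(TY)}$, whence
\[
a(T)=c(m,\kappa)^{-1}(2i)^m\,b(T)\,\det(T)^{\frac{m+1}{2}}\int_{Y>0}P(T,Y)\,e^{-4\pi\trace(TY)}\prod_{j\le k}dY_{jk}.
\]
Every monomial of $P$ is then integrated against $e^{-4\pi\trace(TY)}$ by the matrix Gamma integral $\int_{Y>0}e^{-\trace(SY)}\det(Y)^{s-\frac{m+1}{2}}\,dY=\Gamma_m(s)\det(S)^{-s}$ with $S=4\pi T$, together with its $Y^{-1}$-weighted variants; each term yields a constant multiple of $\det(T)^{\frac{1-m}{2}}$. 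Collecting these and using $c(m,\kappa)=(4\pi)^{-m\frac{m+1}{2}}\Gamma_m(\frac{m+1}{2})$ should leave $a(T)=-(4\pi)^m\det(T)b(T)$.

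The main obstacle is the pair of linked steps above: extracting the closed form of $P(T,Y)$ from \cite{freitag} and then showing that, after integration, the $Y^{-1}$-correction terms assemble with the top term into a single clean factor. A quick check shows this is not cosmetic, since keeping only the top term $(2\pi i)^m\det(T)$ would produce $(-1)^{m+1}$ times the stated coefficient — so for even $m$ the corrections genuinely flip the overall sign, and pinning down their exact contribution is the technical heart of the argument. Once the coefficient formula holds, the formal image is immediate: from $\det(\partial_Z)h(Z)=(2\pi i)^m\sum_{T>0}\det(T)b(T)\exp(2\pi i\trace(TZ))$ and $(4\pi)^m/(2\pi i)^m=(-2i)^m$ we obtain
\[
S_{\tilde h}(Z)=-(4\pi)^m(2\pi i)^{-m}\det(\partial_Z)\bigl(h(Z)\bigr)=(-1)^{m+1}(2i)^m\det(\partial_Z)\bigl(h(Z)\bigr).
\]
Finally, since nothing forces $\kappa=m+1$ until the very last simplification, I would carry a general weight $\kappa$ (hence general $\alpha=\kappa-\frac{m+3}{2}$ and an extra factor $\det(Y)^{\kappa-m-1}$ in the measure) through the integral; the evaluation then gives $\det(T)b(T)$ times an explicit function of $\kappa$ that vanishes for $\kappa=m+2,\dots,2m$, which is what yields Corollary~\ref{resultat_k>=m}.
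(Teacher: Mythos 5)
Your outline up to the integration step matches the paper's own strategy: the explicit form of your $P(T,Y)$ is exactly Lemma~\ref{det-Ableitung-Produkt}, which gives, with $\alpha=\frac{m-1}{2}$,
\begin{equation*}
A(T,Y)\:=\:b(T)\,e^{-2\pi\trace(TY)}\,\det(Y)^{-1}\sum_{q=0}^m(-4\pi)^q\,C_{m-q}(\alpha)\,\trace\bigl((Y^{\frac12}TY^{\frac12})^{[q]}\bigr)\:,
\end{equation*}
the $q=m$ term being your top term and the $q<m$ terms your $Y^{-1}$-corrections. The genuine gap is in the next step: at $\kappa=m+1$ the ``matrix Gamma integral together with its $Y^{-1}$-weighted variants'' that you propose to apply term by term does not exist for the correction terms. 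By Lemma~\ref{Koeff_int}, the $q$-th term requires the value of $C_q(-s')\Gamma_m(s')$ at $s'=\frac{m-1}{2}$, and $\Gamma_m(s')=\pi^{\frac{m(m-1)}{4}}\prod_{\nu=0}^{m-1}\Gamma(s'-\frac{\nu}{2})$ has a simple pole there; the zero of $C_q(-s')$ cancels that pole only when $q=m$. So every correction integral diverges, and this is not an artifact of splitting the integrand: near $Y=0$ only the $q=0$ term of the bracket survives, so the full integrand is asymptotically $C_m(\alpha)\,b(T)\det(T)^{\frac{m+1}{2}}\det(Y)^{\frac{m-1}{2}}\,dY_{inv}$, whose integral is $\Gamma_m$ evaluated at its pole, hence divergent. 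No assembly of convergent Gamma integrals, however careful the bookkeeping, can therefore produce $a(T)$.

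What is missing is the mechanism that constitutes the actual content of the paper's proof: Sturm's integral has to be regularized by inserting $\det(TY)^s$, each $I_q(s)$ computed for $\re s\gg 0$ and continued analytically; the sum over $q$ then collapses, by the combinatorial identity of Lemma~\ref{big_sum}, to the polynomial $(-1)^m s(s-\frac12)\cdots(s-\frac{m-1}{2})$, and $a(T)$ is the limit at $s=0$ of this simple zero multiplied against the simple pole of $\Gamma_m(\frac{m-1}{2}+s)$. The nonzero phantom term is literally this residue. This also corrects your heuristic: it is not that the corrections ``flip the sign of the top term for even $m$'' --- individually they are infinite --- but that the answer is a pole--zero cancellation, uniform in $m$. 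Your final paragraph is consistent with this picture, but inverts its logic: for $k\geq m$ (i.e.\ $\kappa\geq m+2$) the integrals genuinely converge and the same factor $s$ forces the limit to vanish, which is Corollary~\ref{resultat_k>=m}; what singles out $\kappa=m+1$ is precisely the pole of $\Gamma_m$, not a different assembly of finite terms.
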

%%%%%%%%%%%%%%%%
%%%%%%%%%%%%%%%%
%%%%%%%%%%%%%%%%
%%%%%%%%%%%%%%%%
%%%%%%%%%%%%%%%%
The automorphic representation generated by a holomorphic cusp form $h\in[\Gamma, k]_0$ does not contain holomorphic functions of weight $\kappa=k+2$. 
Thus the image $S_{\tilde h}$ is not the holomorphic projection of $\tilde h$, and
the following main result is a direct consequence of Theorem~\ref{hauptresultat}.
\begin{thm}\label{noholproj}
 Let $m$ be arbitrary and let $\kappa=m+1$. Then Sturm's operator does not realize the projection to  holomorphic components.
\end{thm}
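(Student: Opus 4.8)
The plan is to refute the operator identity ``$S={}$holomorphic projection'' by exhibiting a single input on which the two operators disagree, namely $\tilde h=\Delta_+^{[m]}(h)$ for a nonzero $h\in[\Gamma,k]_0$ with $k=\kappa-2=m-1$ (we assume $\Gamma$ admits such a cuspform, so that the assertion is non-vacuous). Writing $S(F)$ for the genuine orthogonal projection of $F$ onto $L^2(\Gamma\backslash\Sp_m(\RR))_{\kappa,hol}$, it suffices to verify that $S_{\tilde h}\neq S(\tilde h)$.

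First I would read off $S_{\tilde h}\neq 0$ directly from Theorem~\ref{hauptresultat}: its Fourier coefficients are $a(T)=(-1)(4\pi)^m\det(T)\,b(T)$. Since $h\neq 0$ and the Fourier expansion of a cuspform is supported on positive definite matrices, there is some $T_0>0$ with $b(T_0)\neq 0$; as $\det(T_0)>0$ this forces $a(T_0)\neq 0$, hence $S_{\tilde h}\neq 0$ (and in particular $\tilde h\neq 0$).

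Second, I would show that the true holomorphic projection vanishes, $S(\tilde h)=0$; this is the representation-theoretic core. Because $\Delta_+^{[m]}$ implements the raising operator $\mathfrak p^+$, the function $\tilde h=\Delta_+^{[m]}(h)$ lies in the automorphic representation $\pi$ generated by the holomorphic cuspform $h$. Every irreducible constituent of $\pi$ is of holomorphic (lowest weight) type with minimal $K$-type the scalar weight $k$, and in any such module the space of $\mathfrak p^-$-annihilated vectors coincides with that minimal $K$-type. Hence $\pi$ contains no $\mathfrak p^-$-annihilated, i.e.\ holomorphic, vector of the strictly larger weight $\kappa=k+2$. Since orthogonal projection onto $L^2(\Gamma\backslash\Sp_m(\RR))_{\kappa,hol}$ respects the decomposition into mutually orthogonal spectral components, the projection of $\tilde h\in\pi$ must land in the holomorphic part of $\pi$, which is zero; thus $S(\tilde h)=0$. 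Combining the two steps yields $S_{\tilde h}\neq 0=S(\tilde h)$, proving that $S$ cannot coincide with the holomorphic projection operator.

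The main obstacle is this second step: one must pin down that $\pi$ is genuinely of lowest-weight (holomorphic) type with minimal $K$-type exactly $k$---for the small weight $k=m-1$ this is really a statement about limits or degenerations of the holomorphic discrete series rather than the discrete series proper---and that its $\mathfrak p^-$-kernel is confined to that minimal $K$-type, so that no weight-$\kappa$ holomorphic vector can survive in the spectral component of $\tilde h$. The nonvanishing half, by contrast, is immediate once Theorem~\ref{hauptresultat} is in hand.
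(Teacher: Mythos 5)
Your proposal is correct and follows essentially the same route as the paper: the paper likewise deduces Theorem~\ref{noholproj} from Theorem~\ref{hauptresultat} by noting that $S_{\tilde h}\neq 0$ while the automorphic representation generated by $h$ contains no holomorphic vectors of weight $\kappa=k+2$, so the true projection of $\tilde h$ vanishes. You spell out the representation-theoretic step (lowest-weight type, minimal $K$-type, $\mathfrak p^-$-kernel) that the paper asserts in a single sentence, but the argument is the same.
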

In case $m=2$ it was shown in~\cite{phantom} that the components given by $\Delta_+^{[2]}[\Gamma,1]$ are indeed the only ones on which Sturm's operator for weight $3$ fails to be the 
holomorphic projection. This was done by giving a system of Poincar\'e series.
At the moment we cannot able to proof a correspondingly complete result for $m>2$.
%%%%%%%%%%%%%%%%%%%%%%%
%%%%%%%%%%%%%%%%%%%%%%%
%%%%%%%%%%%%%%%%%%%%%%%
%%%%%%%%%%%%%%%%%%%%%%%
%%%%%%%%%%%%%%%%%%%%%%%
%%%%%%%%%%%%%%%%%%%%%%%
%%%%%%%%%%%%%%%%%%%%%%%
%%%%%%%%%%%%%%%%%%%%%%%
%%%%%%%%%%%%%%%%%%%%%%%
%%%%%%%%%%%%%%%%%%%%%%%
%%%%%%%%%%%%%%%%%%%%%%%
%%%%%%%%%%%%%%%%%%%%%%%
\section{Proof of Theorem~\ref{hauptresultat}}\label{beweis_hauptresultat}
Theorem~\ref{hauptresultat} is proven by the same means as its analog~\cite[Prop.~5.2]{phantom} for $m=2$. But for arbitrary $m$ the book keeping is more involved.
Let 
\begin{equation*}
  h(Z)\:=\:\sum_{T>0}b(T)\exp(2\pi i\trace(TZ))\:
 \end{equation*}
 be a cuspform for $\Gamma$ of arbitrary scalar weight $k$. This Fourier expansion converges absolutely and locally uniformly. So to compute the image
 \begin{equation*}
 \tilde h(Z)\:=\:\Delta_+^{[m]} (h)(Z)\:=\: (2i)^m(\det\nolimits^{k+\frac{1-m}{2}})(Y^{-1})\det(\partial_Z)\left(\det\nolimits^{k+\frac{1-m}{2}}(Y)\cdot h(Z)\right)
\end{equation*}
it is enough to compute the Fourier coefficients $b(T,Y)$ given by
\begin{equation*}
\frac{b(T)(2i)^m}{\exp(2\pi i\trace(TZ))}(\det\nolimits^{k+\frac{1-m}{2}})(Y^{-1})\det(\partial_Z)\left(\det\nolimits^{k+\frac{1-m}{2}}(Y)\cdot \exp(2\pi i\trace(TZ)\right)
\end{equation*}
of $\tilde h(Z)=\sum_{T>0}b(T,Y)\exp(2\pi i\trace(TZ))$.
\begin{lem}\label{det-Ableitung-Produkt}
 The derivative
 \begin{equation*}
  \det(\partial_Z)\left(\det\nolimits^{j}(Y)\cdot \exp(2\pi i\trace(TZ)\right)
\end{equation*}
equals
\begin{equation*}
 (2i)^{-m}\det(Y)^{j-1}\exp(2\pi i\trace(TZ))\sum_{q=0}^m (-4\pi)^qC_{m-q}(j)\trace((Y^{\frac{1}{2}}TY^{\frac{1}{2}})^{[q]})\:.
\end{equation*} 
\end{lem}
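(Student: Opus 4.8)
The plan is to reduce to the base point $Y=E_m$ by $\GL_m(\RR)$-covariance and then to expand $\det(\partial_Z)$ by the Leibniz rule. First I would fix the normalisation $\partial_{rs}=\tfrac{1+\delta_{rs}}{2}\partial_{Z_{rs}}$, so that $\det(\partial_Z)=\sum_{\sigma}\operatorname{sgn}(\sigma)\prod_{r=1}^{m}\partial_{r\sigma(r)}$, and record the three differentiation rules from \cite{freitag} on which the computation rests: because $\exp(2\pi i\trace(TZ))$ is holomorphic, $\partial_{rs}\exp(2\pi i\trace(TZ))=2\pi i\,T_{rs}\exp(2\pi i\trace(TZ))$, whereas $\det(Y)^{j}$ and $Y^{-1}$ depend only on $Y=\operatorname{Im}Z$ and satisfy $\partial_{rs}\det(Y)^{j}=\tfrac{j}{2i}\det(Y)^{j}(Y^{-1})_{rs}$ and $\partial_{rs}(Y^{-1})_{lp}=-\tfrac{1}{2i}(Y^{-1})_{lr}(Y^{-1})_{sp}$ (symmetrised in the index pairs). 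These rules are the only analytic input; everything else is bookkeeping.

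Next I would exploit covariance to move $Y$ to the identity. For $A\in\GL_m(\RR)$ and $Z'=A^{T}ZA$ one has, as matrices of operators, $\partial_Z=A\,\partial_{Z'}A^{T}$, hence $\det(\partial_Z)=\det(A)^{2}\det(\partial_{Z'})$; moreover $\det(Y)\mapsto\det(A)^{2}\det(Y)$, the quantity $\trace(TZ)$ is unchanged if $T$ is replaced by $T'=A^{-1}TA^{-T}$, and the eigenvalues of $Y^{\frac12}TY^{\frac12}$, which are those of $YT$, are invariant under $(Y,T)\mapsto(A^{T}YA,\,A^{-1}TA^{-T})$. A short check then shows that the asserted identity for $(Z',T')$ implies it for $(Z,T)$; taking $A=Y^{-\frac12}$ brings $Y$ to $E_m$ and $T$ to $W=Y^{\frac12}TY^{\frac12}$. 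Since both sides are polynomials in the entries of $T$, it suffices to prove the identity at $Y=E_m$ for all symmetric $T$, where $\det(Y)=1$, $Y^{-1}=E_m$ and $Y^{\frac12}TY^{\frac12}=T$.

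At $Y=E_m$ I would expand $\det(\partial_Z)\bigl(\det(Y)^{j}\exp(2\pi i\trace(TZ))\bigr)$ by Leibniz and sort the terms by the number $q$ of the factors $\partial_{r\sigma(r)}$ that differentiate the exponential. Each such factor contributes $2\pi i\,T_{r\sigma(r)}$, while the remaining $m-q$ factors act, possibly in chains through the $(Y^{-1})$-entries they generate, on $\det(Y)^{j}$, producing only Kronecker deltas at $Y=E_m$. Summing over $\sigma$ with its sign, the $q$ factors carrying $T$-entries combine by a Laplace-type expansion into the sum of the principal $q\times q$ minors of $T$, that is into $\trace(T^{[q]})$, while the deltas coming from the $m-q$ ``$\det(Y)$''-derivatives contract to a purely numerical coefficient depending only on $j$ and $m-q$, which is $C_{m-q}(j)$. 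The two extreme cases serve as normalisation and checks: for $q=m$ every derivative hits the exponential, giving $\det(\partial_Z)$ of the exponential alone, $(2\pi i)^{m}\det(T)$, which forces $C_0(j)=1$; for $q=0$ every derivative hits $\det(Y)^{j}$, so $(2i)^{-m}C_m(j)$ is the eigenvalue in the classical lowering relation $\det(\partial_Z)\det(Y)^{j}=(2i)^{-m}C_m(j)\det(Y)^{j-1}$. The intermediate $C_{m-q}(j)$ are read off from the recursion obtained when one further derivative acts on an iterated derivative of $\det(Y)^{j}$.

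The main obstacle is the signed summation in the third paragraph: showing that, after summing over $\sigma$, the mixed terms in which several of the $m-q$ derivatives chain through successive $(Y^{-1})$-factors reorganise so that the entire $T$-dependence is carried by the single symmetric function $\trace(T^{[q]})$ and the accompanying scalar is genuinely independent of $T$. The covariance reduction to $Y=E_m$ is what tames this, turning the chains of $(Y^{-1})$-derivatives into finite contractions of Kronecker deltas; the combinatorial identification of the coefficients $C_{m-q}(j)$ then proceeds by induction on $m$, exactly the step that is transparent for $m=2$ in \cite[Prop.~5.2]{phantom} but requires the more elaborate bookkeeping for general $m$.
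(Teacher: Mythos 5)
Your overall strategy is viable and genuinely different from the paper's: where the paper works at general $Y$ and imports Freitag's exterior-power calculus (the $\sqcap$-product, the Leibniz rule $\partial_Y^{[h]}(fg)=\sum_{p+q=h}\binom{h}{p}(\partial_Y^{[p]}f)\sqcap(\partial_Y^{[q]}g)$, the minor-derivative formula, and the conjugation identity of Lemma~\ref{verbindung_Yinvers_mit_T}), you replace the conjugation identity by a $\GL_m(\RR)$-covariance reduction to $Y=E_m$ and then attack the permutation expansion of $\det(\partial_Z)$ directly. The covariance step is correct (both sides scale by $\det(A)^{2-2j}$ under $Z\mapsto A^TZA$, $T\mapsto A^{-1}TA^{-T}$, so the identity at $(Z',T')$ implies it at $(Z,T)$), and your checks at $q=0$ and $q=m$ are right.

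The gap sits in your third and fourth paragraphs, which you yourself flag as ``the main obstacle'' but then dispose of in one sentence. Two things actually have to be proved there. First, that after evaluation at $Y=E_m$ the only permutations $\sigma$ contributing nontrivially for a given splitting $\{1,\dots,m\}=S\sqcup S^c$ (with $S$ the indices whose derivatives fall on $\det(Y)^j$) are those preserving $S$; this requires the chain analysis of the Kronecker deltas, showing that each connected block of $\det(Y)$-derivatives forces $\sigma$ to permute that block within itself, so that $\sum_{\sigma}\operatorname{sgn}(\sigma)\prod_{r\in S^c}T_{r\sigma(r)}$ assembles into the principal minor $\det(T_{S^c,S^c})$ and, after summing over $S^c$, into $\trace(T^{[q]})$. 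Second, and more substantively, the scalar multiplying $\trace(T^{[q]})$ is then $\sum_{\sigma_1\in\operatorname{Sym}(S)}\operatorname{sgn}(\sigma_1)\prod_{r\in S}\partial_{r\sigma_1(r)}\det(Y)^j\big|_{Y=E_m}$, i.e.\ the $S\times S$ subdeterminant of $\partial_Z$ applied to the \emph{full} $m\times m$ determinant power, and your claim is that this equals $(2i)^{-(m-q)}C_{m-q}(j)$. This is not a consequence of your two extreme checks at $q=0$ and $q=m$: it is exactly the diagonal case of Freitag's formula $\partial_Y^{[p]}(\det(Y)^\alpha)=C_p(\alpha)\det(Y)^\alpha Y^{-[p]}$, i.e.\ equation~(\ref{det_ableitung}) of Lemma~\ref{freitag_kollektion} -- the very statement the paper imports from \cite{freitag} -- and it needs its own induction (Laplace expansion in a row of the subblock), which your sketch never carries out. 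As written, the proposal reduces the lemma to an unproven combinatorial identity of essentially the same depth as the lemma itself; completing it would amount to reproving Freitag's Hilfssatz at the point $Y=E_m$.
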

Here $C_l(\alpha)=\alpha(\alpha+\frac{1}{2})\cdots(\alpha+\frac{l-1}{2})$, and for a  matrix $M$ the matrix $M^{[q]}$ is the matrix of its $q$-th exterior power, i.e. its entries are the 
$(q\times q)$-subminors of $M$.
Lemma~\ref{det-Ableitung-Produkt} is proven (see section \ref{beweis_von_det-Ableitung-Produkt}) by applying a sequence of arguments due to Freitag~\cite[III.6]{freitag}.

According to Lemma~\ref{det-Ableitung-Produkt} the Fourier coefficients of $\tilde h$ are 
\begin{equation*}
 b(T,Y)\:=\: b(T)\sum_{q=0}^m (-4\pi)^qC_{m-q}(k+\frac{1-m}{2})\det(Y)^{-1}\trace((Y^{\frac{1}{2}}TY^{\frac{1}{2}})^{[q]})\:.
\end{equation*}
Sturm's operator for weight $\kappa=k+2$ now is
\begin{equation*}
 S\::\: b(T,Y)\exp(-2\pi\trace(TY))\:\mapsto\: a(T)c(m,\kappa)^{-1}\:,
\end{equation*}
where $a(T)$ is given by the limit $s\to 0$ of the coefficients
\begin{equation*}
 a(T,s)\:=\:\int_{Y>0}b(T,Y)\exp(-4\pi\trace(TY))\det(TY)^{\kappa-\frac{m+1}{2}+s}~dY_{inv}\:.
\end{equation*}
\begin{lem}\label{Koeff_int}
Define
 \begin{equation*}
  I_q(s)\::=\:\int_{Y>0} \trace((Y^{\frac{1}{2}}TY^{\frac{1}{2}})^{[q]})\det(TY)^s\exp(-4\pi\trace(TY))~dY_{inv}\:.
 \end{equation*}
For complex $s$ with $\re s>>0$ such that the integral exists we have
\begin{equation*}
 I_q(s)\:=\:(-4\pi)^{-q}(4\pi)^{-ms}\binom{m}{q}C_q(-s)\Gamma_m(s)\:. 
\end{equation*}
\end{lem}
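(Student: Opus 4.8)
The plan is to strip the integral of its $T$-dependence by a linear substitution, reduce the remaining $T$-free integral to the Gindikin--Siegel matrix gamma integral, and extract the factor $\trace(W^{[q]})$ by a matrix differential operator whose action on $\det(S)^{-s}$ is governed by the same Cayley-type identities that underlie Lemma~\ref{det-Ableitung-Produkt}. First I would substitute $W=T^{\frac{1}{2}}YT^{\frac{1}{2}}$, which is legitimate since $T>0$. Under $Y\mapsto T^{\frac12}YT^{\frac12}$ the invariant measure is preserved, $dY_{inv}=dW_{inv}$, while $\trace(TY)=\trace(W)$ and $\det(TY)=\det(W)$. Moreover $Y^{\frac12}TY^{\frac12}$, $TY$ and $W=T^{\frac12}YT^{\frac12}$ are mutually conjugate, hence share their spectrum, so $\trace((Y^{\frac12}TY^{\frac12})^{[q]})=\trace(W^{[q]})$. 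Consequently $I_q(s)$ loses all dependence on $T$ and becomes
\[
 I_q(s)=\int_{W>0}\trace(W^{[q]})\det(W)^{s-\frac{m+1}{2}}\exp(-4\pi\trace(W))\,dW,\qquad dW=\prod_{j\leq k}dW_{jk}.
\]
Scaling $W=(4\pi)^{-1}V$ then isolates the archimedean constant: since $\trace(W^{[q]})$ is homogeneous of degree $q$, $\det(W)^{s-\frac{m+1}{2}}$ of degree $m(s-\frac{m+1}{2})$, and $dW$ of degree $\tfrac{m(m+1)}{2}$, the powers of $4\pi$ collect to $(4\pi)^{-q-ms}$, leaving
\[
 I_q(s)=(4\pi)^{-q-ms}\int_{V>0}\trace(V^{[q]})\det(V)^{s-\frac{m+1}{2}}\exp(-\trace(V))\,dV.
\]

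Next I would evaluate the $T$-free integral by inserting a symmetric matrix parameter $S$ into the exponent and differentiating. Writing $\partial_S$ for the symmetric matrix of derivatives normalized so that $\partial_S\exp(-\trace(SV))=-V\exp(-\trace(SV))$ entrywise, the $q$-th exterior trace brings down the corresponding principal-minor sum, $\trace(\partial_S^{[q]})\exp(-\trace(SV))=(-1)^q\trace(V^{[q]})\exp(-\trace(SV))$. For $\re s\gg 0$ the integrand and all these derivatives are dominated by integrable functions, so differentiation under the integral sign is legitimate; applying $\trace(\partial_S^{[q]})$ to the Gindikin--Siegel formula
\[
 \int_{V>0}\det(V)^{s-\frac{m+1}{2}}\exp(-\trace(SV))\,dV=\Gamma_m(s)\det(S)^{-s}\qquad(S>0)
\]
and setting $S=E_m$ yields
\[
 \int_{V>0}\trace(V^{[q]})\det(V)^{s-\frac{m+1}{2}}\exp(-\trace(V))\,dV=(-1)^q\Gamma_m(s)\bigl[\trace(\partial_S^{[q]})\det(S)^{-s}\bigr]_{S=E_m}.
\]

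The remaining and principal task is the purely algebraic identity
\[
 \bigl[\trace(\partial_S^{[q]})\det(S)^{\alpha}\bigr]_{S=E_m}=\binom{m}{q}C_q(\alpha),
\]
which is exactly of the Cayley/Capelli type treated in Freitag~\cite[III.6]{freitag}: it refines the classical $\det(\partial_S)\det(S)^{\alpha}=C_m(\alpha)\det(S)^{\alpha-1}$ (the case $q=m$) to the individual exterior powers $\trace(\partial_S^{[q]})$, and it is checked at once for $q=0$ and $q=1$. I expect establishing this identity for general $q$ --- keeping track of the normalization of $\partial_S$ and of the principal-minor bookkeeping --- to be the main obstacle, just as the analogous bookkeeping was the crux of Lemma~\ref{det-Ableitung-Produkt}. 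Granting it with $\alpha=-s$, the displayed integral equals $(-1)^q\binom{m}{q}C_q(-s)\Gamma_m(s)$, and substituting back gives
\[
 I_q(s)=(4\pi)^{-q-ms}(-1)^q\binom{m}{q}C_q(-s)\Gamma_m(s)=(-4\pi)^{-q}(4\pi)^{-ms}\binom{m}{q}C_q(-s)\Gamma_m(s),
\]
using $(-1)^q(4\pi)^{-q}=(-4\pi)^{-q}$, which is the asserted value.
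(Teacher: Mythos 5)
Your proposal is correct and follows essentially the same route as the paper: the paper likewise removes the $T$-dependence by invariance of $dY_{inv}$, pulls out $(4\pi)^{-(q+ms)}$ by scaling, and evaluates the remaining integral (its Lemma~\ref{q-trace-integral}) by applying $\partial_T^{[q]}$ to the Gindikin--Siegel identity $\int_{Y>0}\exp(-\trace(TY))\det(Y)^s\,dY_{inv}=\det(T)^{-s}\Gamma_m(s)$ and setting $T=E_m$, exactly as you do with $\trace(\partial_S^{[q]})$. The identity you flag as the main obstacle, $\bigl[\trace(\partial_S^{[q]})\det(S)^{\alpha}\bigr]_{S=E_m}=\binom{m}{q}C_q(\alpha)$, is precisely Freitag's formula $\partial_Y^{[q]}(\det(Y)^\alpha)=C_q(\alpha)\det(Y)^\alpha Y^{-[q]}$ quoted in Lemma~\ref{freitag_kollektion}~(\ref{det_ableitung}), traced at the identity, so it requires no further proof here.
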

\begin{proof}[Proof of Lemma~\ref{Koeff_int}]
 By the invariance of the measure $dY_{inv}$ we have
 \begin{eqnarray*}
  I_q(s)&=&
  \int_{Y>0} \trace(Y^{[q]})\det(Y)^s\exp(-4\pi\trace(Y))~dY_{inv}\\
  &=&
  (4\pi)^{-(q+ms)} \int_{Y>0} \trace(Y^{[q]})\det(Y)^s\exp(-\trace(Y))~dY_{inv}\:.
 \end{eqnarray*}
The last integral is seen to equal $(-1)^q\binom{m}{q}C_q(-s)\Gamma_m(s)$ by Lemma~\ref{q-trace-integral}.
\end{proof}
According to Lemma~\ref{Koeff_int} we write
\begin{equation*}
 a(T,s)\:=\: b(T)\det(T)\sum_{q=0}^m C_{m-q}(k+\frac{1-m}{2})(-4\pi)^qI_q(\kappa-\frac{m+1}{2}+s-1)\:,
\end{equation*}
respectively, (recalling $\kappa=k+2$) this equals
\begin{equation*}
b(T)\det(T)\frac{\Gamma_m(k+\frac{1-m}{2}+s)}{(4\pi)^{m(k+\frac{1-m}{2}+s)}}\sum_{q=0}^m \binom{m}{q}C_{m-q}(k+\frac{1-m}{2})C_q(-(k+1-\frac{m+1}{2}+s))\:.
\end{equation*}
%%%%%%%%%%%%%
%%%%%%%%%%%%%
%%%%%%%%%%%%%
%%%%%%%%%%%%%
%%%%%%%%%%%%%
\begin{lem}\label{big_sum}
For an interger $m>0$ define the polynomial
\begin{equation*}
 P_m(s,z)\:=\:\sum_{q=0}^m\binom {m}{q}(-1)^q\prod_{j=0}^{m-q-1}(z+\frac{j}{2})\prod_{j=0}^{q-1}(z+s-\frac{j}{2})\:.
\end{equation*}
Then for all $s,z\in\CC$ it holds
\begin{equation*}
 P_m(s,z)\:=\: (-1)^ms(s-\frac{1}{2})\cdots(s-\frac{m-1}{2})\:.
\end{equation*} 
\end{lem}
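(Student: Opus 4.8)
The plan is to strip away the half-integer steps, reorient the two products so that they become ordinary falling factorials, and then recognize the sum as a Chu--Vandermonde convolution that collapses to a single falling factorial. The point of the lemma is that the apparent dependence on $z$ cancels completely, and this cancellation is exactly what Vandermonde's identity provides.

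First I would normalize the step size. Every factor in the two products advances by $\tfrac12$, so I write $z+\tfrac j2=\tfrac12(2z+j)$ and $z+s-\tfrac j2=\tfrac12(2z+2s-j)$ and set $x:=2z$, $y:=2s$. This pulls a factor $2^{-(m-q)}$ out of the first product and $2^{-q}$ out of the second, for a total of $2^{-m}$ that is independent of $q$; the right-hand side likewise contributes $2^{-m}\prod_{j=0}^{m-1}(2s-j)$. Hence, after multiplying by $2^{m}$, the assertion is equivalent to the integer-step identity
\[
 \sum_{q=0}^m\binom{m}{q}(-1)^q\Big(\prod_{j=0}^{m-q-1}(x+j)\Big)\Big(\prod_{j=0}^{q-1}(x+y-j)\Big)\:=\:(-1)^m\prod_{j=0}^{m-1}(y-j)\:,
\]
to be established for all $x,y\in\CC$ (with the usual convention that empty products equal $1$, covering the end cases $q=0$ and $q=m$).

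Next I would convert the ascending product into a descending one by the reflection $\prod_{j=0}^{k-1}(x+j)=(-1)^k\prod_{j=0}^{k-1}(-x-j)$ with $k=m-q$. The signs combine as $(-1)^q(-1)^{m-q}=(-1)^m$, so the left-hand side becomes $(-1)^m\sum_{q=0}^m\binom{m}{q}\big(\prod_{j=0}^{m-q-1}(-x-j)\big)\big(\prod_{j=0}^{q-1}((x+y)-j)\big)$. Now both products are falling factorials, one based at $-x$ of length $m-q$ and one based at $x+y$ of length $q$. Their Chu--Vandermonde convolution is the falling factorial based at $(-x)+(x+y)=y$ of length $m$, that is $\prod_{j=0}^{m-1}(y-j)$; the variable $x$ disappears, and the remaining $(-1)^m$ yields precisely the claimed right-hand side.

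Finally I would supply the Chu--Vandermonde step in the form $\sum_{q=0}^m\binom{m}{q}\big(\prod_{j=0}^{m-q-1}(a-j)\big)\big(\prod_{j=0}^{q-1}(b-j)\big)=\prod_{j=0}^{m-1}(a+b-j)$, applied with $a=-x$ and $b=x+y$. Since both sides are polynomials of degree $m$ in $(a,b)$, it suffices to verify it for nonnegative integers, where dividing by $m!$ turns it into the classical Vandermonde identity $\sum_q\binom{a}{m-q}\binom{b}{q}=\binom{a+b}{m}$. I do not expect a genuine obstacle here: the only delicate points are the sign bookkeeping in passing from $(-1)^q$ to the uniform $(-1)^m$ and the empty-product edge cases, both of which are routine. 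As an alternative one could prove the integer-step identity directly by induction on $m$ using Pascal's rule $\binom{m}{q}=\binom{m-1}{q}+\binom{m-1}{q-1}$, but the Vandermonde route is cleaner and makes the vanishing of the $z$-dependence transparent.
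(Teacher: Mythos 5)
Your proof is correct, but it takes a genuinely different route from the paper. The paper proves the lemma by direct induction on $m$: applying Pascal's rule $\binom{m+1}{q}=\binom{m}{q}+\binom{m}{q-1}$ inside the sum yields the recursion
\begin{equation*}
P_{m+1}(s,z)\:=\:z\cdot P_m(s-\tfrac{1}{2},z+\tfrac{1}{2})-(z+s)\,P_m(s-\tfrac{1}{2},z)\:,
\end{equation*}
from which the claim follows immediately with base case $P_1(s,z)=-s$; note the parameters shift by half-integers at each step, so the induction hypothesis must hold for all $s,z\in\CC$, which is exactly how the paper states it. You instead normalize away the half-integer steps ($x=2z$, $y=2s$), reflect the ascending product into a falling factorial, and recognize the sum as the Chu--Vandermonde convolution $\sum_{q}\binom{m}{q}\,a^{\underline{m-q}}\,b^{\underline{q}}=(a+b)^{\underline{m}}$ with $a=-x$, $b=x+y$, so that $x$ cancels in $a+b=y$. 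Your reduction is sound, including the sign bookkeeping $(-1)^q(-1)^{m-q}=(-1)^m$ and the polynomial-identity argument extending Vandermonde from nonnegative integer arguments to all of $\CC$. What your route buys is conceptual clarity: it exposes the lemma as a disguised classical identity and makes the disappearance of $z$ structural rather than a seemingly fortuitous cancellation. What the paper's route buys is self-containedness and brevity: the induction needs no external identity, no rescaling, and no density argument, and it establishes the statement for all complex $s,z$ in one stroke --- indeed your own closing remark (induction via Pascal's rule) is essentially the paper's proof, just performed before the change of variables rather than after.
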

\begin{proof}[Proof of Lemma~\ref{big_sum}]
We have $P_1(s,z)=-s$.
  Applying the identity $\binom{m+1}{q}=\binom{m}{q}+\binom{m}{q-1}$ to
 \begin{equation*}
  P_{m+1}(s,z)\:=\:\sum_{q=0}^{m+1}\binom {m+1}{q}(-1)^q\prod_{j=0}^{m-q}(z+\frac{j}{2})\prod_{j=0}^{q-1}(z+s-\frac{j}{2})
 \end{equation*}
we get
\begin{eqnarray*}
 P_{m+1}(s,z) &=&
 \sum_{q=0}^m\binom {m}{q}(-1)^q\prod_{j=0}^{m-q-1}(z+\frac{1}{2}+\frac{j}{2})\prod_{j=0}^{q-1}(z+s-\frac{j}{2})\\
 && -\sum_{q=0}^{m-1}\binom {m}{q}(-1)^q\prod_{j=0}^{m-q-1}(z+\frac{j}{2})\prod_{j=0}^{q}(z+s-\frac{j}{2})\\
 && +(-1)^{m+1}\prod_{j=0}^m(z+s-\frac{j}{2})\:. 
\end{eqnarray*}
This is
\begin{equation*}
 P_{m+1}(s,z)\:=\:z\cdot P_m(s-\frac{1}{2},z+\frac{1}{2})-(z+s) P_m(s-\frac{1}{2},z)\:.
\end{equation*}
Assuming by induction  the lemma to be true for  $m$ and all $s,z\in\CC$, we get
\begin{equation*}
 P_{m+1}(s,z)\:=\:(-1)^m(z-(z+s))(s-\frac{1}{2})\cdots(s-\frac{m}{2})\:=\:(-1)^{m+1}s(s-\frac{1}{2})\cdots(s-\frac{m}{2})\:.
\end{equation*} 
\end{proof}
Recalling $C_l(\alpha)=\alpha(\alpha+\frac{1}{2})\cdots(\alpha+\frac{l-1}{2})$ we have by Lemma~\ref{big_sum}
\begin{equation*}
 a(T,s)\:=\: b(T)\det(T)\frac{\Gamma_m(k+\frac{1-m}{2}+s)}{(4\pi)^{m(k+\frac{1-m}{2}+s)}}P_m(s,k+\frac{1-m}{2})\:.
\end{equation*}
So the image of Sturm's operator is given by
\begin{equation*}
 \lim_{s\to 0} a(T,s)c(m,\kappa)^{-1}
 \end{equation*}
which equals
 \begin{equation}\label{bild_gleichung}
\frac{(-1)^mb(T)\det(T)}{c(m,\kappa)}\cdot
\lim_{s\to 0} \frac{\Gamma_m(k+\frac{1-m}{2}+s)}{(4\pi)^{m(k+\frac{1-m}{2}+s)}}\cdot s(s-\frac{1}{2})\cdots(s-\frac{m-1}{2})\:. 
\end{equation}
Recall the product formula for $\Gamma_m$,
\begin{equation*}
 \Gamma_m(s)\:=\:\pi^{\frac{m(m-1)}{4}} \prod_{\nu=0}^{m-1}\Gamma(s-\frac{\nu}{2})\:.
\end{equation*}
For the proof of Theorem~\ref{hauptresultat} we now specialize to the case $k=m-1$, thus $\kappa=m+1$. In this case the above limit~(\ref{bild_gleichung}) is
\begin{equation*}
 \frac{(-1)^mb(T)\det(T)\pi^{\frac{m(m-1)}{4}}}{c(m,k+2)(4\pi)^{\frac{m(m-1)}{2}}}\cdot\prod_{\nu=0}^{m-2}\left(\Gamma(\frac{m-1-\nu}{2})(-\frac{\nu+1}{2})\right)
 \cdot\lim_{s\to 0}s\cdot\Gamma(s)\:.
\end{equation*}
Recalling $c(m,\kappa)=(4\pi)^{-m(k+\frac{1-m}{2}+1)}\Gamma_m(k+\frac{1-m}{2}+1)$ this simplyfies to
\begin{equation*}
 \lim_{s\to 0} a(T,s)c(m,\kappa)^{-1}\:=\: (-1)(4\pi)^mb(T)\det(T)\:.
 \end{equation*}
This proves Theorem~\ref{hauptresultat} for Fourier coefficients. The formula for the image function  follows by 
\begin{equation*}
 \det(\partial_Z)\left(\sum_{T>0}b(T)\exp(2\pi i\trace(TZ))\right)\:=\:  (2\pi i)^m\sum_{T>0}b(T)\det(T)\exp(2\pi i\trace(TZ))\:.
\end{equation*}

For scalar weights $k\geq m$, we get the following corollary of the proof of Theorem~\ref{hauptresultat}.
\begin{cor}\label{resultat_k>=m}
 For all holomorphic cusp forms $h\in[\Gamma,k]_0$ of weight $k\geq m$ the image of $\Delta_+^{[m]} (h)$ under Sturm's operator vanishes.
\end{cor}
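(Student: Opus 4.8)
The plan is to reuse the computation carried out in the proof of Theorem~\ref{hauptresultat} up to equation~(\ref{bild_gleichung}). That derivation is valid for an \emph{arbitrary} scalar weight $k$; the specialization $k=m-1$ entered only at the very last step. Hence, for general $k$ with $\kappa=k+2$, the image of $\tilde h=\Delta_+^{[m]}(h)$ under Sturm's operator is again governed by~(\ref{bild_gleichung}): the coefficient $a(T)$ equals $(-1)^mb(T)\det(T)c(m,\kappa)^{-1}$ times
\begin{equation*}
 \lim_{s\to 0}\frac{\Gamma_m(k+\frac{1-m}{2}+s)}{(4\pi)^{m(k+\frac{1-m}{2}+s)}}\cdot s(s-\tfrac12)\cdots(s-\tfrac{m-1}{2})\:.
\end{equation*}
The product $s(s-\frac12)\cdots(s-\frac{m-1}{2})$ tends to $0$ as $s\to 0$ because of its leading factor $s$. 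Therefore the only mechanism by which $a(T)$ could fail to vanish is a compensating pole of $\Gamma_m(k+\frac{1-m}{2}+s)$ at $s=0$, and the whole content of the corollary is to show that, for $k\geq m$, no such pole is present.

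To locate the potential pole I would expand $\Gamma_m$ by the product formula $\Gamma_m(w)=\pi^{\frac{m(m-1)}{4}}\prod_{\nu=0}^{m-1}\Gamma(w-\frac{\nu}{2})$, so that $\Gamma_m(k+\frac{1-m}{2}+s)$ is a constant times $\prod_{\nu=0}^{m-1}\Gamma(k+\frac{1-m}{2}+s-\frac{\nu}{2})$. The arguments of the constituent factors at $s=0$ are $k+\frac{1-m-\nu}{2}$ for $\nu=0,\dots,m-1$; they decrease in $\nu$ and attain their minimum $k-m+1$ at $\nu=m-1$. A factor $\Gamma$ is singular at $s=0$ exactly when its argument there is a non-positive integer, which for $k=m-1$ happens precisely for the minimal argument (giving the pole $\Gamma(s)$ that produced the nonzero value in Theorem~\ref{hauptresultat}).

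For $k\geq m$, however, every argument satisfies $k+\frac{1-m-\nu}{2}\geq k-m+1\geq 1>0$, so each of the $m$ Gamma-factors is finite and holomorphic at $s=0$. Consequently $\Gamma_m(k+\frac{1-m}{2}+s)$ is regular at $s=0$, the displayed limit is a finite number times $\lim_{s\to 0}s(s-\frac12)\cdots(s-\frac{m-1}{2})=0$, and hence $a(T)=0$ for every $T>0$. This gives the vanishing of the Sturm image of $\Delta_+^{[m]}(h)$ and proves the corollary.

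Since the substantive analytic work—the closed evaluation leading to~(\ref{bild_gleichung}), resting on Lemmas~\ref{det-Ableitung-Produkt}, \ref{Koeff_int} and~\ref{big_sum}—is already in place, I expect no genuine obstacle here. The one point requiring care is the bookkeeping of where the poles of the individual Gamma-factors sit as $k$ varies: one must verify that the single pole responsible for the phantom in Theorem~\ref{hauptresultat} is the minimal-argument factor and that it migrates to a strictly positive argument as soon as $k\geq m$, so that the zero coming from $s$ is no longer cancelled.
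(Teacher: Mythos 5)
Your proposal is correct and follows the same route as the paper: both rest on the observation that equation~(\ref{bild_gleichung}) holds for arbitrary $k$, and that for $k\geq m$ the factor $\Gamma_m(k+\frac{1-m}{2}+s)$ is holomorphic at $s=0$, so the zero from $s(s-\frac12)\cdots(s-\frac{m-1}{2})$ forces the limit to vanish. Your extra bookkeeping via the product formula $\Gamma_m(w)=\pi^{\frac{m(m-1)}{4}}\prod_{\nu=0}^{m-1}\Gamma(w-\frac{\nu}{2})$ is just a more explicit verification of the holomorphy that the paper asserts directly.
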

\begin{proof}[Proof of Corollary~\ref{resultat_k>=m}]
The image under Sturm's operator in terms of Fourier coefficients is given by (\ref{bild_gleichung}).
But the Gamma function $\Gamma_m(z)$ is holomorphic at $z=k+\frac{1-m}{2}$ for $k\geq m$.
So evaluating the limit~(\ref{bild_gleichung}) in this case yields zero, i.e. the image of Sturm's operator vanishes.
\end{proof}
The corollary has the following consequence for Sturm's operator of weight $\kappa\geq m+2$. There are no phantom terms arising from the spectral components corresponding to $[\Gamma,\kappa-2]_0$.

%%%%%%%%%%%%%%%%%%%%%%%%
%%%%%%%%%%%%%%%%%%%%%%%%
%%%%%%%%%%%%%%%%%%%%%%%%
%%%%%%%%%%%%%%%%%%%%%%%%
%%%%%%%%%%%%%%%%%%%%%%%%
%%%%%%%%%%%%%%%%%%%%%%%%
%%%%%%%%%%%%%%%%%%%%%%%%
%\newpage
\subsection{Proof of Lemma~\ref{det-Ableitung-Produkt}}\label{beweis_von_det-Ableitung-Produkt}
Recall that for a $(m,m)$-matrix $M$ its $q$-th exterior power is given by $M^{[q]}$, the $(\binom{m}{q},\binom{m}{q})$-matrix having entries given by $(q\times q)$-minors of $M$ indexed by
$M^{[q]}_{a,b}$ for subsets $a,b\subset\{1,\dots, m\}$ of cardinality $\lvert a\rvert=\lvert b\rvert=q$.
Especially, for $q=m$ we get $M^{[m]}=\det (M)$.

We follow the notation of~\cite[III.6]{freitag}.
For endomorphisms $A$ and $B$ of the exterior powers of a complex vetorspace, $A:\bigwedge^p V\to \bigwedge^p V$ and $B:\bigwedge^q V\to \bigwedge^q V$, let 
\begin{equation*}
 A \sqcap B\::\: \bigwedge\nolimits^{p+q} V\to \bigwedge\nolimits^{p+q}V
\end{equation*}
the induced endomorphism on $\bigwedge^{p+q}$.
By \cite[p. 207f]{freitag} the matrix entries of $A \sqcap B$ with respect to those of $A$ and $B$ are 
\begin{equation}\label{verbinde_A_mit_B}
 (A \sqcap B)_{a,b}\:=\:\binom{p+q}{p}^{-1}\sum_{a',b'}\epsilon(a',a'')\epsilon(b',b'')A_{a',b'}B_{a'',b''}\:,
\end{equation}
where $a',a''$ are subsets of $a$ such that $a'\cup a''=a$ and 
$b',b''$ are subsets of $b$ such that $b'\cup b''=b$ with $\lvert a'\rvert=\lvert b'\rvert=p$ and $\lvert a''\rvert=\lvert b''\rvert=q$.
The sign $\epsilon(a',a'')=\pm 1$ is that of the permutation of $p+q$ elements which for natural ordered sets $a'$ and $a''$ produces the natural order on $(a',a'')$.
For a real symmetric  matrix variable $T=(t_{\mu\nu})_{\mu\nu}$ the derivative $\partial_T$ has entries ${\partial_T}_{\mu\nu}=\frac{1}{2}(1+\delta_{\mu\nu})\frac{\partial}{\partial_{t_{\mu\nu}}}$,
and for complex symmetric $Z=X+iY$ we have $\partial_Z=\frac{1}{2}(\partial_X-i\partial_Y)$.
\begin{lem}\label{freitag_kollektion}\cite[pp. 210, 211, 213]{freitag}
 Let $Y$ be a  symmetric $(m,m)$-matrix variable. For all symmetric $(m,m)$-matrices $T$ it holds
 \begin{equation}\label{exp_ableitung}
  \partial_Y^{[q]}\left(\exp(\trace(TY))\right)\:=\: T^{[q]}e^{\trace(TY)}\:.
 \end{equation}
Further,
\begin{equation}\label{det_ableitung}
 \partial_Y^{[q]}(\det(Y)^\alpha)\:=\:C_q(\alpha)\det(Y)^\alpha (Y)^{-[q]}\:,
\end{equation}
where $C_q(\alpha)=\alpha(\alpha+\frac{1}{2})\cdots(\alpha+\frac{q-1}{2})$.
For $C^\infty$-functions $f$ and $g$ in $Y$ it holds
\begin{equation}\label{prod_ableitung}
 \partial_Y^{[h]}(f\cdot g)\:=\:\sum_{p+q=h}\binom{h}{p}(\partial_Y^{[p]}f)\sqcap(\partial_Y^{[q]}g)\:.
\end{equation}
\end{lem}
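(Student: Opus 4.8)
The plan is to prove the three formulas in an order that lets the abstract Leibniz rule~(\ref{prod_ableitung}) serve as the backbone for the two explicit computations. Throughout I would argue entrywise, using that $(\partial_Y^{[h]})_{a,b}$ is the $h\times h$ minor-determinant $\det\!\bigl[(\partial_Y)_{a_i,b_j}\bigr]_{i,j}$ in the pairwise commuting first-order operators $(\partial_Y)_{\mu\nu}=\tfrac12(1+\delta_{\mu\nu})\partial/\partial y_{\mu\nu}$; commutativity is what makes these minor-determinants unambiguous and lets me manipulate them like ordinary determinants.

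I would establish the product rule~(\ref{prod_ableitung}) first. Expanding $(\partial_Y^{[h]})_{a,b}$ by the Leibniz formula for determinants writes it as a signed sum $\sum_\sigma\operatorname{sgn}(\sigma)\prod_i(\partial_Y)_{a_i,b_{\sigma(i)}}$; applying the ordinary product rule for commuting derivations to each monomial distributes the $h$ factors over $f$ and $g$ in all $2^h$ ways, and I would regroup by the index set $S$ (of size $p$) whose operators act on $f$. For a fixed pair of induced partitions $a=a'\sqcup a''$, $b=b'\sqcup b''$ the sum over the $p!\,(h-p)!$ compatible $\sigma$ collapses into the product of two smaller minor-determinants $(\partial_Y^{[p]}f)_{a',b'}(\partial_Y^{[h-p]}g)_{a'',b''}$, and the factor $\binom hp$ in~(\ref{prod_ableitung}) is exactly what cancels the normalisation $\binom{p+q}{p}^{-1}$ built into $\sqcap$ in~(\ref{verbinde_A_mit_B}). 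The one delicate point is verifying that $\operatorname{sgn}(\sigma)$ factors as $\epsilon(a',a'')\,\epsilon(b',b'')$ times the two block signs, i.e. that the signs assemble into precisely those prescribed in~(\ref{verbinde_A_mit_B}).

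For the exponential identity~(\ref{exp_ableitung}) the case $q=1$ is a one-line computation: the normalising factor $\tfrac12(1+\delta_{\mu\nu})$ cancels the doubling of the off-diagonal symmetric variables, so $\partial_Y\trace(TY)=T$ and hence $\partial_Y\exp(\trace(TY))=T\exp(\trace(TY))$ by the chain rule. The general case needs no further differentiation: since the exponent is \emph{linear} in $Y$, each commuting operator $(\partial_Y)_{\mu\nu}$ acts on $\exp(\trace(TY))$ as scalar multiplication by $t_{\mu\nu}$, so expanding the minor-determinant $(\partial_Y^{[q]})_{a,b}$ simply pulls the corresponding signed products of the $t_{\mu\nu}$ out of the exponential and reassembles them into $\det(T_{a,b})=(T^{[q]})_{a,b}$.

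The determinant-power identity~(\ref{det_ableitung}) is the one requiring real work; for $q=m$ it is the classical Cayley identity $\det(\partial_Y)\det(Y)^\alpha=C_m(\alpha)\det(Y)^{\alpha-1}$ on the symmetric cone. The base case $q=1$ is Jacobi's cofactor formula $\partial_Y\det(Y)=\det(Y)Y^{-1}$, giving $\partial_Y\det(Y)^\alpha=\alpha\det(Y)^\alpha Y^{-1}=C_1(\alpha)\det(Y)^\alpha Y^{-[1]}$. For general $q$ I would first argue by $\GL_m$-equivariance that $\partial_Y^{[q]}\det(Y)^\alpha$ must equal a scalar $C_q(\alpha)$ times $\det(Y)^\alpha Y^{-[q]}$: under $Y\mapsto g^{\top}Yg$ both sides transform in the same way, and on the single open $\GL_m$-orbit of positive definite matrices an equivariant section is determined by its value at $Y=E_m$, which is an $O_m$-invariant endomorphism of $\bigwedge^q\RR^m$ and thus (one checks) a scalar multiple of the identity. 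It then remains to pin down the scalar, and here the half-integer steps in $C_q(\alpha)=\alpha(\alpha+\tfrac12)\cdots(\alpha+\tfrac{q-1}{2})$ are the real content: they are the signature of the \emph{symmetric} normalisation, coinciding with the roots of the $b$-function of $\det$ on $\Symm_m(\RR)$. I would obtain them by evaluating along a one-parameter family at $Y=E_m$ to produce the recursion $C_q(\alpha)=(\alpha+\tfrac{q-1}{2})\,C_{q-1}(\alpha)$, cross-checked against the convolution identity $C_h(\alpha+\beta)=\sum_{p+q=h}\binom hp C_p(\alpha)C_q(\beta)$ that~(\ref{prod_ableitung}) forces upon applying $\partial_Y^{[h]}$ to $\det(Y)^\alpha\det(Y)^\beta$ and using $Y^{-[p]}\sqcap Y^{-[q]}=Y^{-[h]}$. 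The principal obstacle of the whole lemma is pinning down this constant with its correct half-integer shifts rather than integer ones.
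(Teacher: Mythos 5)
You are not competing with a proof in the paper here: Lemma~\ref{freitag_kollektion} is quoted from Freitag \cite[pp.~210, 211, 213]{freitag}, and the paper supplies no argument for it. So your attempt has to be judged on its own merits, effectively against Freitag's proofs.

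Two of the three formulas are handled correctly. For \eqref{prod_ableitung}, expanding the operator minor by the Leibniz formula, distributing the commuting derivations over $f$ and $g$, and regrouping by the induced partitions $a=a'\cup a''$, $b=b'\cup b''$ is exactly right; the sign identity $\operatorname{sgn}(\sigma)=\epsilon(a',a'')\,\epsilon(b',b'')\operatorname{sgn}(\sigma')\operatorname{sgn}(\sigma'')$ that you flag as the delicate point is the standard one underlying the generalized Laplace expansion, and the factor $\binom{h}{p}$ does cancel the normalisation $\binom{p+q}{p}^{-1}$ in \eqref{verbinde_A_mit_B}. For \eqref{exp_ableitung}, linearity of $\trace(TY)$ in $Y$ together with the observation that the normalisation $\frac12(1+\delta_{\mu\nu})$ yields $\partial_Y\trace(TY)=T$ makes each entry of $\partial_Y$ act on the exponential as multiplication by the corresponding entry of $T$, so the minors reassemble into $T^{[q]}$. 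Both arguments are complete in outline.

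The genuine gap is in \eqref{det_ableitung}, precisely at the point you yourself call the real content. Your equivariance argument does validly reduce the statement to $\partial_Y^{[q]}\det(Y)^\alpha=c_q(\alpha)\det(Y)^\alpha\,Y^{-[q]}$ for an unknown scalar $c_q(\alpha)$ (the complexification of $\bigwedge^q\RR^m$ is irreducible under $O(m)$, so the invariant endomorphism at $Y=E_m$ is indeed scalar). But the identification $c_q(\alpha)=C_q(\alpha)$ is then asserted rather than derived: ``evaluating along a one-parameter family at $Y=E_m$'' names no family and performs no computation, and the recursion $c_q(\alpha)=(\alpha+\frac{q-1}{2})\,c_{q-1}(\alpha)$ is exactly what has to be proved. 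Moreover, the cross-check you propose cannot close this gap even in principle: the convolution identity $C_h(\alpha+\beta)=\sum_{p+q=h}\binom{h}{p}C_p(\alpha)C_q(\beta)$ is the Chu--Vandermonde identity for shifted factorials $\alpha(\alpha+c)\cdots(\alpha+(q-1)c)$ and holds identically for \emph{every} shift $c$ (divide by $c^h$ to reduce to the rising-factorial case), so together with the base case $c_1(\alpha)=\alpha$ it is satisfied just as well by the wrong candidate $\alpha(\alpha+1)\cdots(\alpha+q-1)$. The half-integer steps must come out of an actual differentiation. The standard way to get them, in substance Freitag's, is: expand $(\partial_Y^{[q]})_{a,b}$ along one row, apply the inductive hypothesis for $q-1$, rewrite $\det(Y)^\alpha\,(Y^{-[q-1]})_{a',b'}$ as $\pm\det(Y)^{\alpha-1}$ times a complementary minor of $Y$ (Jacobi's identity), and differentiate once more; the bookkeeping of diagonal versus off-diagonal entries, each off-diagonal derivative carrying the factor $\frac12$, is what produces the shift $\frac{q-1}{2}$. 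Until that, or an equivalent computation, is carried out, the central formula of the lemma --- the one on which Lemma~\ref{det-Ableitung-Produkt} and Lemma~\ref{q-trace-integral} rest --- remains unproved.
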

%%%%%%%%%%%%%%%%%%%
%%%%%%%%%%%%%%%%%%%
%%%%%%%%%%%%%%%%%%%
%%%%%%%%%%%%%%%%%%%
%%%%%%%%%%%%%%%%%%%
%%%%%%%%%%%%%%%%%%%
We apply Lemma~\ref{freitag_kollektion} for $f(Z)=\det(Y)^j$ and $g(Z)=\exp(2\pi i\trace(TZ))$.
\begin{eqnarray*}
 &&\det(\partial_Z)\left(\det(Y)^j\cdot\exp(2\pi i\trace(TZ))\right)\\
 && =\: \sum_{p+q=m}\binom{m}{p}(\partial_Z^{[p]}\det(Y)^j)\sqcap(\partial_Z^{[q]}\exp(2\pi i\trace(TZ)))\\
 && =\: \sum_{p+q=m}\binom{m}{p}C_p(j)\frac{(2\pi i)^q}{(2i)^p}\det(Y)^j\exp(2\pi i\trace(TZ))\left((Y^{-1})^{[p]}\sqcap T^{[q]}\right) 
\end{eqnarray*}
%%%%%%%%%%%%%%%%%%
%%%%%%%%%%%%%%%%%%
%%%%%%%%%%%%%%%%%%
%%%%%%%%%%%%%%%%%%
%%%%%%%%%%%%%%%%%%
%%%%%%%%%%%%%%%%%%
%%%%%%%%%%%%%%%%%%
%%%%%%%%%%%%%%%%%%
%%%%%%%%%%%%%%%%%%
\begin{lem}\label{verbindung_Yinvers_mit_T}\cite[pp. 209, 215]{freitag}
 Let $0\leq p+q=h\leq m$ and let $Y$ and $T$ be symmetric positive definite $(m,m)$-matrices. Let $Y^{\frac{1}{2}}$ be the symmetric positive definite 
 matrix such that $Y^{\frac{1}{2}}Y^{\frac{1}{2}}=Y$.  Then it holds
 \begin{equation*}
  \left(Y^{-[p]}\sqcap T^{[q]}\right)Y^{[h]}\:=\:(Y^{-\frac{1}{2}})^{[h]}\left(E_m^{[p]}\sqcap T[Y^{\frac{1}{2}}]^{[q]}\right)(Y^{\frac{1}{2}})^{[h]}\:.
 \end{equation*}
\end{lem}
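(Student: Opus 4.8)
The plan is to reduce the identity to two structural facts — functoriality of exterior powers and the $\GL_m$-equivariance of the product $\sqcap$ — and then to a short formal manipulation. Write $g:=Y^{\frac{1}{2}}$, which by $Y>0$ is symmetric positive definite, hence invertible, so each $g^{[r]}$ is invertible with inverse $(g^{[r]})^{-1}=(Y^{-\frac{1}{2}})^{[r]}$. I will use throughout the multiplicativity $(AB)^{[r]}=A^{[r]}B^{[r]}$, whence $E_m^{[r]}$ is the identity on $\bigwedge^r$, $Y^{[h]}=(g^{[h]})^2$, $Y^{-[p]}=(g^{[p]})^{-2}$, $(Y^{-\frac{1}{2}})^{[h]}=(g^{[h]})^{-1}$, and, since $T[Y^{\frac{1}{2}}]=Y^{\frac{1}{2}}TY^{\frac{1}{2}}=gTg$, also $T[Y^{\frac{1}{2}}]^{[q]}=g^{[q]}T^{[q]}g^{[q]}$. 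Substituting these into the assertion, the lemma becomes the purely algebraic statement
\begin{equation*}
 \left((g^{[p]})^{-2}\sqcap T^{[q]}\right)(g^{[h]})^2\:=\:(g^{[h]})^{-1}\left(E_m^{[p]}\sqcap\left(g^{[q]}T^{[q]}g^{[q]}\right)\right)g^{[h]}\:.
\end{equation*}

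The key input is the equivariance of $\sqcap$. Recall that $A\sqcap B=\binom{p+q}{p}^{-1}\mu\circ(A\otimes B)\circ\Delta$ is induced from the comultiplication $\Delta\colon\bigwedge^{p+q}V\to\bigwedge^p V\otimes\bigwedge^q V$ and the wedge multiplication $\mu\colon\bigwedge^p V\otimes\bigwedge^q V\to\bigwedge^{p+q}V$; one checks this agrees with the combinatorial description~(\ref{verbinde_A_mit_B}), the binomial factor being exactly the normalization $\mu\circ\Delta=\binom{p+q}{p}\cdot\id$. Since $\mu$ and $\Delta$ are morphisms of $\GL(V)$-modules, i.e. $\mu\circ(g^{[p]}\otimes g^{[q]})=g^{[p+q]}\circ\mu$ and $\Delta\circ g^{[p+q]}=(g^{[p]}\otimes g^{[q]})\circ\Delta$ for every $g\in\GL_m$, one obtains the left and right equivariances
\begin{equation*}
 (g^{[p]}A)\sqcap(g^{[q]}B)\:=\:g^{[h]}(A\sqcap B)\:,\qquad (Ag^{[p]})\sqcap(Bg^{[q]})\:=\:(A\sqcap B)g^{[h]}\:.
\end{equation*}
I expect this equivariance to be the real content of the lemma, and thus the main obstacle to a self-contained proof: deducing it directly from~(\ref{verbinde_A_mit_B}) is a bookkeeping exercise with the signs $\epsilon(a',a'')$ and the Cauchy--Binet expansion, while the conceptual route above makes it transparent but requires matching the two descriptions of $\sqcap$. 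This is presumably exactly what the cited pages of~\cite{freitag} supply.

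Granting the equivariances, the manipulation is short. Applying the right equivariance with $A=(g^{[p]})^{-1}$ and $B=g^{[q]}T^{[q]}$, and using $(g^{[p]})^{-1}g^{[p]}=E_m^{[p]}$, gives
\begin{equation*}
 E_m^{[p]}\sqcap\left(g^{[q]}T^{[q]}g^{[q]}\right)\:=\:\left((g^{[p]})^{-1}\sqcap g^{[q]}T^{[q]}\right)g^{[h]}\:.
\end{equation*}
Applying the left equivariance to the inner factor, with $A=(g^{[p]})^{-2}$ and $B=T^{[q]}$, gives
\begin{equation*}
 (g^{[p]})^{-1}\sqcap g^{[q]}T^{[q]}\:=\:g^{[h]}\left((g^{[p]})^{-2}\sqcap T^{[q]}\right)\:.
\end{equation*}
Combining the two displays yields $E_m^{[p]}\sqcap(g^{[q]}T^{[q]}g^{[q]})=g^{[h]}\left((g^{[p]})^{-2}\sqcap T^{[q]}\right)g^{[h]}$, so conjugating by $(g^{[h]})^{-1}$ on the left and $g^{[h]}$ on the right recovers the reduced identity of the first paragraph, and hence the lemma. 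The only further points needing care are the invertibility of $Y^{\frac{1}{2}}$, guaranteed by $Y>0$, and the symmetry of $T[Y^{\frac{1}{2}}]$, both immediate.
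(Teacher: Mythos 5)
Your proposal is correct, but there is nothing in the paper to compare it against: the paper does not prove Lemma~\ref{verbindung_Yinvers_mit_T} at all --- the citation \cite[pp.~209, 215]{freitag} \emph{is} the paper's entire proof, and the lemma is used as a black box. So the relevant comparison is with Freitag's treatment, which establishes the transformation behaviour of $\sqcap$ by direct computation with compound matrices and the sign combinatorics of (\ref{verbinde_A_mit_B}). Your route is genuinely different and more conceptual: you realize $A\sqcap B$ as $\binom{p+q}{p}^{-1}\mu\circ(A\otimes B)\circ\Delta$, obtain the two one-sided $\GL$-equivariances from naturality of the wedge multiplication $\mu$ and the comultiplication $\Delta$, and then the lemma follows from Cauchy--Binet multiplicativity $(AB)^{[r]}=A^{[r]}B^{[r]}$ and a two-line conjugation argument. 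I checked the details: the reduction to the $g=Y^{\frac{1}{2}}$ form is exact (including $T[Y^{\frac{1}{2}}]^{[q]}=g^{[q]}T^{[q]}g^{[q]}$, using symmetry of $g$), both applications of equivariance are instantiated correctly, and the final conjugation recovers the asserted identity. The one step you defer --- agreement of the $\mu$--$\Delta$ description with the combinatorial formula (\ref{verbinde_A_mit_B}) --- is indeed routine: evaluating $\mu\circ(A\otimes B)\circ\Delta$ on a basis vector $e_b$ and collecting the coefficient of $e_a$ reproduces precisely the signed sum in (\ref{verbinde_A_mit_B}), with $\mu\circ\Delta=\binom{p+q}{p}\,\id$ fixing the normalization; similarly, the equivariances follow since $\textstyle\bigwedge g$ is an algebra automorphism and $\Delta$ is natural. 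What your approach buys is independence from Freitag's sign bookkeeping with $\epsilon(a',a'')$ and a proof that visibly needs no positivity of $T$ (and needs $Y>0$ only for the existence and invertibility of the symmetric square root); what it costs is setting up the structure-map formalism, which is exactly the material the citation was meant to avoid reproducing.
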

%%%%%%%%%%%%%%%%%%
%%%%%%%%%%%%%%%%%%
%%%%%%%%%%%%%%%%%%
%%%%%%%%%%%%%%%%%%
%%%%%%%%%%%%%%%%%%
%%%%%%%%%%%%%%%%%%
%%%%%%%%%%%%%%%%%%
%%%%%%%%%%%%%%%%%%
Combining (\ref{verbinde_A_mit_B}) and Lemma~\ref{verbindung_Yinvers_mit_T} for $h=m$ we get
\begin{eqnarray*}
 \left(Y^{-[p]}\sqcap T^{[q]}\right)
 &=&
 \det(Y)^{-1}\left( E_m^{[p]}\sqcap (Y^{\frac{1}{2}}TY^{\frac{1}{2}})^{[q]}\right)\\
 &=&
 \frac{1}{\binom{m}{p}\det(Y)}\sum_{a\subset\{1,\dots,m\},\lvert a\rvert=p}\epsilon(a,a^c)^2(Y^{\frac{1}{2}}TY^{\frac{1}{2}})^{[q]}_{a^c,a^c}\\
 &=&
 \frac{1}{\binom{m}{p}\det(Y)}\trace((Y^{\frac{1}{2}}TY^{\frac{1}{2}})^{[q]})\:.
\end{eqnarray*}
It follows that
\begin{eqnarray*}
 &&\det(\partial_Z)\left(\det(Y)^j\cdot\exp(2\pi i\trace(TZ))\right)\\
 && =\: (2i)^{-m}\det(Y)^{j-1}\exp(2\pi i\trace(TZ))\sum_{q=0}^m C_{m-q}(j)(-4\pi)^q\trace((Y^{\frac{1}{2}}TY^{\frac{1}{2}})^{[q]})\:,
\end{eqnarray*}
which is Lemma~\ref{det-Ableitung-Produkt}.
%%%%%%%%%%%%%%%%%
%%%%%%%%%%%%%%%%%
%%%%%%%%%%%%%%%%%
%%%%%%%%%%%%%%%%%
%%%%%%%%%%%%%%%%%
%%%%%%%%%%%%%%%%%
%%%%%%%%%%%%%%%%%
\bigskip

We include another lemma used in the proof of Theorem~\ref{hauptresultat}.
\begin{lem}\label{q-trace-integral}
 It holds
 \begin{equation*}
  \int_{Y>0}Y^{[q]}\exp(-\trace(Y))\det(Y)^s~dY_{inv}\:=\:(-1)^qC_q(-s)\Gamma_m(s)E_{\binom{m}{q}}\:.
 \end{equation*}
\end{lem}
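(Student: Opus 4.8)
The plan is to avoid evaluating the $q$-minor integral directly and instead reduce everything to the classical matrix-argument Gamma integral, after which the two exterior-power differentiation rules of Lemma~\ref{freitag_kollektion} do all the work. Concretely, I would start from the Gindikin--Siegel formula
\begin{equation*}
 F(S)\::=\:\int_{Y>0}\exp(-\trace(SY))\det(Y)^s~dY_{inv}\:=\:\Gamma_m(s)\det(S)^{-s}\:,
\end{equation*}
valid for symmetric positive definite $S$ and $\re s$ large enough for convergence (the same range as in Lemma~\ref{Koeff_int}). Note that $\det(Y)^s\,dY_{inv}=\det(Y)^{s-\frac{m+1}{2}}\prod_{j\leq k}dY_{jk}$, so this really is the standard Siegel integral; the case $S=E_m$ recovers $\int_{Y>0}\exp(-\trace Y)\det(Y)^s\,dY_{inv}=\Gamma_m(s)$.

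Next I would apply the matrix operator $\partial_S^{[q]}$, the $q$-th exterior power of the symmetric derivative in the variable $S$, to both sides. On the integral side I interchange $\partial_S^{[q]}$ with the integral and use~(\ref{exp_ableitung}) of Lemma~\ref{freitag_kollektion} with the constant matrix $T=-Y$ (so $\trace(TS)=-\trace(SY)$), together with the homogeneity $(-Y)^{[q]}=(-1)^qY^{[q]}$ of the exterior power:
\begin{equation*}
 \partial_S^{[q]}\exp(-\trace(SY))\:=\:(-1)^qY^{[q]}\exp(-\trace(SY))\:.
\end{equation*}
This yields
\begin{equation*}
 \partial_S^{[q]}F(S)\:=\:(-1)^q\int_{Y>0}Y^{[q]}\exp(-\trace(SY))\det(Y)^s~dY_{inv}\:.
\end{equation*}
On the closed-form side, formula~(\ref{det_ableitung}) with exponent $\alpha=-s$ gives
\begin{equation*}
 \partial_S^{[q]}\bigl(\Gamma_m(s)\det(S)^{-s}\bigr)\:=\:\Gamma_m(s)C_q(-s)\det(S)^{-s}S^{-[q]}\:.
\end{equation*}

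Finally I would specialize to $S=E_m$. There $\det(E_m)^{-s}=1$, and the exterior power of the identity is again the identity, $E_m^{-[q]}=E_{\binom{m}{q}}$, so equating the two expressions for $\partial_S^{[q]}F(S)$ at $S=E_m$ gives $(-1)^q\int_{Y>0}Y^{[q]}\exp(-\trace Y)\det(Y)^s\,dY_{inv}=\Gamma_m(s)C_q(-s)E_{\binom{m}{q}}$, which is exactly the assertion after multiplying through by $(-1)^q$. The one genuine technical point, the main obstacle, is justifying the interchange of $\partial_S^{[q]}$ with the integral, i.e. locally uniform convergence of the differentiated integrand for $S$ in a neighborhood of $E_m$ inside the cone; this follows from the exponential factor $\exp(-\trace(SY))$ once $\re s$ is taken large, and the identity then extends to the stated range by analytic continuation in $s$. (Alternatively, one could first deduce from the $O(m)$-invariance of $dY_{inv}$ and the irreducibility of $\bigwedge^q\RR^m$ that the integral is a scalar multiple of $E_{\binom{m}{q}}$ via Schur's lemma, and then compute its trace; but the differentiation argument produces the full matrix identity, scalar shape included, in a single step.)
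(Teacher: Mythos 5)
Your proof is correct and is essentially the paper's own argument: the paper likewise differentiates the classical identity $\int_{Y>0}\exp(-\trace(TY))\det(Y)^s\,dY_{inv}=\det(T)^{-s}\Gamma_m(s)$ by $\partial_T^{[q]}$, invokes (\ref{exp_ableitung}) and (\ref{det_ableitung}) of Lemma~\ref{freitag_kollektion}, and evaluates at $T=E_m$. Your added care about interchanging $\partial_S^{[q]}$ with the integral and continuing analytically in $s$ is a point the paper leaves implicit, but it does not change the method.
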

\begin{proof}[Proof of Lemma~\ref{q-trace-integral}]
 Differentiating both sides of the wellknown identity
 \begin{equation*}
  \int_{Y>0}\exp(-\trace(TY))\det(Y)^s~dY_{inv}\:=\:\det(T)^{-s}\Gamma_m(s)
 \end{equation*}
by $\partial_T^{[q]}$ we get by using Lemma~\ref{freitag_kollektion} (\ref{exp_ableitung}) and (\ref{det_ableitung})
the identity
\begin{equation*}
 \int_{Y>0}Y^{[q]}\exp(-\trace(TY))\det(Y)^s~dY_{inv}\:=\: (-1)^qC_q(-s)\Gamma_m(s)\det(T)^{-s}T^{-[q]}\:.
\end{equation*}
Evaluating this at $T=E_m$ yields the lemma.
\end{proof}

%%%%%%%%%%%%%%%%%%%%%%%
%%%%%%%%%%%%%%%%%%%%%%%
%%%%%%%%%%%%%%%%%%%%%%%
%%%%%%%%%%%%%%%%%%%%%%%
%%%%%%%%%%%%%%%%%%%%%%%
%%%%%%%%%%%%%%%%%%%%%%%
%%%%%%%%%%%%%%%%%%%%%%%
%%%%%%%%%%%%%%%%%%%%%%%
%%%%%%%%%%%%%%%%%%%%%%%                      

\end{document}